\definecolor{darkblue}{rgb}{0.2,0.2,0.6}
\definecolor{superdarkblue}{rgb}{0.2,0.2,0.3}
\definecolor{citegreen}{rgb}{0.2,0.2,0.6}
\definecolor{green2}{rgb}{0.,0.6,0.2}
\newcommand\nb{\nabla}
\newcommand{\beq}{\begin{equation} \begin{split}}
\newcommand{\eeq}{\end{split} \end{equation}}
\newcommand\Sg{\Sigma}
\newcommand\Omg{\Omega}
\newcommand\fru{u_\Omg}
\newcommand\avg[1]{\langle #1\rangle}
\def\section{\@startsection{section}{1}\z@{.9\linespacing\@plus\linespacing}%
	{.7\linespacing} {\fontsize{13}{14}\selectfont\bfseries\centering}}
\def\paragraph{\@startsection{paragraph}{4}%
	\z@{0.3em}{-.5em}%
	{$\bullet$ \ \normalfont\itshape}}
\renewcommand\and{\qquad\text{and}\qquad}
\newcommand\sm{\setminus}
\newcommand\dl{\delta}
\newcommand{\comm}[1]{}
\def\bm1{\mathbbm{1}}
\def\G{\Gamma}
\def\s{\sigma}
\def\p{\partial}
\def\omg{\omega}
\def\arr{\rightarrow}
\def\lm{\lambda}
\def\s{\sigma}
\def\p{\partial}
\def\kp{\kappa}
\def\dd{{\mathsf{d}}}
\def\omg{\omega}
\newcounter{counter_a}
\newenvironment{myenum}{\begin{list}{{\rm(\roman{counter_a})}}%
{\usecounter{counter_a}
\setlength{\itemsep}{1.ex}\setlength{\topsep}{0.8ex}
\setlength{\leftmargin}{5ex}\setlength{\labelwidth}{5ex}}}{\end{list}}
\newcommand{\eg}{{\it e.g.}\,}
\newcommand{\ie}{{\it i.e.}\,}
\newcommand{\cf}{{\it cf.}\,}
\numberwithin{figure}{section}
\numberwithin{equation}{section}
\theoremstyle{plain}
\newtheorem*{thm*}{Theorem}
\newtheorem{thm}{Theorem}[section]
\newtheorem{hyp}[thm]{Hypothesis}
\newtheorem{lem}[thm]{Lemma}
\newtheorem{prop}[thm]{Proposition}
\newtheorem{example}[thm]{Example}
\newtheorem{cor}[thm]{Corollary}
\newtheorem{dfn}[thm]{Definition}
\theoremstyle{remark}
\newtheorem{remark}[thm]{Remark}
\theoremstyle{plain}
\newcommand{\supp}{\mathrm{supp}\,}
\newcommand{\beu}{\begin{equation*}}
\newcommand{\eeu}{\end{equation*}}
\newcommand{\besu}{\begin{equation*}
\begin{aligned}}
\newcommand{\eesu}{\end{aligned}
\end{equation*}}
\newcommand{\bes}{\begin{equation}
\begin{aligned}}
\newcommand{\ees}{\end{aligned}
\end{equation}}
\newcommand\cA{\mathcal A}
\newcommand\cB{\mathcal B}
\newcommand\cF{\mathcal F}
\newcommand\cS{\mathcal S}
\newcommand\fra{\mathfrak a}
\newcommand\frh{\mathfrak h}
\newcommand\eps{\varepsilon}
\newcommand\ov{\overline}
\newcommand\wt{\widetilde}
\newcommand\void[1]{}
\def\ov{\overline}
\def\eps{\varepsilon}
      \def\dC{{\mathbb C}}
      \def\dR{{\mathbb R}}
   \def\dZ{{\mathbb Z}}
\def\sfA{{\mathsf A}}
\def\cA{{\mathcal A}}   \def\cB{{\mathcal B}}   \def\cC{{\mathcal C}}
      \def\cF{{\mathcal F}}
\def\cS{{\mathcal S}}      
\def\cV{{\mathcal V}}
\newcommand{\dom}{\mathrm{dom}\,}
\newcommand{\vl}[1]{{\color{darkblue} #1}}
\title[]{Trace Hardy inequality for the Euclidean space with a cut and its applications}
\author[M. Dauge]{Monique Dauge}
\address{Institut de recherche math\'ematique de Rennes (IRMAR -- UMR CNRS 6625)
Universit\'e de Rennes 1,
263 avenue du G\'en\'eral Leclerc,
35042 Rennes CEDEX,
France
}
\author[M. Jex]{Michal Jex} 
\address{
Department of Physics\\
Faculty of Nuclear Sciences and Physical Engineering, Czech Technical University in Prague, B\v rehov\'a 7, 11519 Prague, Czech Republic\\
E-mail: {michal.jex@fjfi.cvut.cz }
}
\author[V. Lotoreichik]{Vladimir Lotoreichik}
\address{
Department of Theoretical Physics\\
Nuclear Physics Institute, Czech Academy of Sciences, 
25068 \v{R}e\v{z}, Czech Republic\\
E-mail: {lotoreichik@ujf.cas.cz }
}
\begin{document}

\begin{abstract}
	We obtain a trace Hardy inequality for the Euclidean space with a bounded cut $\Sigma\subset\dR^d$, $d \ge 2$. In this novel geometric setting, the Hardy-type inequality non-typically holds also for $d = 2$. 
	The respective Hardy weight is given
	in terms of the geodesic distance to the boundary of $\Sg$. We provide its applications
   to the heat equation on $\dR^d$ with an insulating cut at $\Sigma$ and to the Schr\"odinger operator with a $\delta'$-interaction supported on 
	$\Sigma$. We also obtain generalizations of this trace Hardy inequality for a class of unbounded cuts.
\end{abstract}
\maketitle	
\section{Introduction}
%
The classical Hardy inequality in the Euclidean space $\dR^d$, $d \ge 3$, can be stated as follows
\begin{equation}\label{eq:Hardy}
\int_{\dR^d}|\nabla u|^2\dd x \ge 
\frac{(d-2)^2}{4}\int_{\dR^d}\frac{|u|^2}{|x|^2} \dd x, 
\qquad \forall\, u\in H^1(\dR^d).
\end{equation}
This inequality and some of its generalizations 
can be found in \eg~\cite[\S 5.3]{D95}.
They provide a technical tool in various studies of elliptic partial differential operators. 
Probably, the most famous of them is Kato's proof of self-adjointness and semi-boundedness 
for many-body quantum Hamiltonians~\cite{K51}; 
\cf the discussion in the review~\cite{S17}. 
From this perspective, the physically observed stability of the hydrogen atom 
is just a simple consequence of the inequality~\eqref{eq:Hardy}.
We also point out usefulness of~\eqref{eq:Hardy} in the proof of self-adjointness and semi-boundedness 
for a class of Schr\"{o}dinger operators with potentials having decoupled singularities~\cite{GMNT16}.
Further applications of the Hardy inequality concern criticality properties of 
the Schr\"o\-dinger operators~\cite{DFP14} and large time decay of the heat 
semigroups~\cite{VZ00}, the latter being intimately connected with the notion 
of transiency of the Brownian motion in stochastic analysis; \cf~\cite{KK14} 
and the references therein. 

Recently, considerable interest has been attracted by various trace versions of 
the Hardy inequality~\cite{FMT13, N11, T15, T16, vH16}. 
The progenitor of all the trace Hardy inequalities
and also the most illustrative of them is frequently attributed to T.~Kato and claims that
\begin{equation}\label{eq:Hardy_trace}
\int_{\dR^d_+}|\nabla u|^2\dd x 
\ge 
2\left( \frac{\G(\frac{d}{4})}{\G(\frac{d-2}{4})}\right )^2 
\int_{ \p\dR^d_+ } \frac{|u|^2}{|x'|} \dd x',	\qquad \forall \, u\in H^1(\dR^d_+),
\end{equation}
where $\dR^d_+ = \{(x',x_d)\in\dR^d\colon x'\in\dR^{d-1}, x_d > 0\}$, $d \ge 3$,
denotes the upper half-space in $\dR^d$ and $\G(\cdot)$ stands for the Euler 
$\G$-function. The inequality~\eqref{eq:Hardy_trace} 
can be derived from the Hardy inequality
for the relativistic Schr\"odinger operator~\cite{H77,Kato} and the Sobolev trace theorem
for the half-space. It is applicable \eg~in the 
study of Schr\"odinger operators with surface potentials~\cite{ES88, FL08}.

The main result of the present paper,
formulated in Theorem~\ref{thm:bnd}, is a trace Hardy inequality in a novel geometric setting of the Euclidean space with a cut 
across an interface, the latter being a bounded, non-closed hypersurface $\Sg\subset\dR^d$, $d \ge 2$.
By saying that $\Sg$ is non-closed, we essentially
mean that $\dR^d\setminus\ov\Sg$ is connected. In this setting, we replace the ordinary trace by its jump across $\Sg$. Under an additional Ahlfors-David-type regularity assumption,
the weight on $\Sigma$ involved in the inequality is estimated in terms of the geodesic distance to its boundary $\p\Sigma$.
Trace Hardy inequalities for the jump hold also in the two-dimensional case, thus being substantially different from the classical 
trace Hardy inequalities.

The trace Hardy inequality for $\dR^d\setminus\ov\Sg$ implies an
integral upper bound on the large time decay for 
the weighted $L^2$-norm of the temperature-jump 
across $\Sg$ for the solutions of the heat equation with the (insulating) Neumann boundary condition on
the cut $\Sg$ being imposed. The singularity of the
respective weight on $\p\Sigma$
reflects the intuition that the temperature levels off quicker in a neighbourhood
of $\p\Sigma$.

An analogous trace Hardy inequality is also proven
in Theorem~\ref{thm:cone} for a class of unbounded cuts. Namely, we consider a cut across the hypersurface $\Sg\subset\G:=\dR^{d-1}\times\{0\}$, $d \ge 3$, which is a (hyper)cone. Some generalizations for cuts with non-trivial topology such as M\"{o}bius strips are also discussed.

Considerations of the present paper are largely inspired by the 
spectral analysis of the Schr\"o\-dinger
operator with the $\dl'$-interaction supported on a non-closed curve carried out by two of us in~\cite{JL16}. The investigation of
Schr\"odinger operators with $\dl'$-interactions supported on hypersurfaces became a topic of permanent interest -- see,
\eg, \cite{BGLL15, BEL14, BLL13AHP, EJ13, EKh15, EKh18,
	L18, MPS16}. 
Such Hamiltonians appear, \eg, in the study of photonic crystals~\cite{FK96a,FK96b} and arise in the
asymptotic limit for a class of structured thin Neumann obstacles~\cite{DFZ18, Kh70}.
The  obtained trace Hardy inequality for a bounded cut implies absence of the negative discrete spectrum for a weak attractive $\delta'$-interaction supported on a bounded non-closed surface $\Sigma$.
This is in shear contrast to attractive $\delta'$-interaction supported on a closed bounded hypersurface, which always induces negative discrete spectrum (see~\cite[Thm. 4.4]{BEL14}).


\subsection*{Organization of the paper}
In Section~\ref{sec:mainres} we describe
the setting in more detail and formulate our main results. Section~\ref{sec:Sobolev} provides
several statements on Sobolev spaces that are used in the proofs.  Section~\ref{sec:bndcut} is devoted to the proof of the trace Hardy inequality for the jump
of the trace across a cut.
Finally, in Section~\ref{appl}
we apply the obtained inequality to the
estimation of the large-time
behaviour for the heat equation on $\dR^d\setminus\ov\Sg$ and to
the Schr\"odinger operator with a $\delta'$-interaction
supported on $\Sg$. In Section~\ref{sec:unb} we consider extensions to a class of unbounded cuts and to topologically non-trivial cuts.
%

\section{Setting of the problem and the main result}\label{sec:mainres}

Throughout the paper, we consider a cut $\Sigma$ in $\dR^d$ satisfying the following assumptions.
\begin{hyp}
\label{hyp:1}
We consider a relatively open subset $\Sigma\subset\Gamma$ of a bounded, orientable, and closed Lipschitz hypersurface\footnote{This means that $\Gamma$ is a Lipschitz manifold without boundary, \ie $\Gamma$ can be covered by a finite collection of local maps in which $\Gamma$ coincides with the graph of a Lipschitz function.} $\Gamma$. We implicitly assume that $\Gamma$ is the boundary of a bounded (Lipschitz) domain $\Omega_+\subset\dR^d$. We suppose moreover that $\dR^d\setminus\ov\Sigma$ is connected (thus $\Gamma\setminus\ov\Sigma$ is not empty).
\end{hyp}

Let $\partial\Sigma=\overline\Sigma\setminus\Sigma$ denote the boundary of $\Sigma$ in $\Gamma$. 
%
%
For any $x \in \Sg$, we abbreviate by $\rho_\Sg(x)$ the geodesic distance 
between $x$ and $\p\Sg$, measured in the induced Riemannian metric of $\Sg$ (\cf~\cite{CP91})
and by $\wt\rho_\Sg(x)$ we denote the respective Euclidean distance in $\dR^d$.

We denote by $\cB_r(x)\subset\dR^d$ the open 
ball of radius $r > 0$ centred at $x\in\dR^d$ and by $\cS\subset\dR^d$ the unit sphere. 
Recall also that $\cF\subset\dR^d$ is called a \emph{$(d-1)$-set} if there exist $r_\star >0$ and $c_-, c_+ > 0$ such that
for any $r \in (0,r_\star)$ one has
\begin{equation}\label{eq:(d-1)}
c_- r^{d-1}\le \Lambda_{d-1}\big(\cB_r(x)\cap \cF\big) \le 
c_+ r^{d-1},\qquad \forall\, x\in \cF,
\end{equation}
where $\Lambda_{d-1}(\cdot)$ stands for the $(d-1)$-dimensional Hausdorff measure. 

Sobolev spaces on domains with cuts are thoroughly investigated in the 
monographs~\cite{Gr85, D88}; see also~\cite{CHM17}.
Roughly speaking, the Sobolev space 
$H^1(\dR^d\setminus\ov\Sg)$ can be defined as a subspace of $H^1(\Omg_+)\oplus H^1(\dR^d\setminus\ov{\Omg_+})$
in which the traces of functions agree on $\G\setminus\ov\Sg$; \cf~Section~\ref{sec:Sobolev} for 
details. 

For any $u\in H^1(\dR^d\setminus\ov\Sg)$
the traces $u|_{\Sg_\pm}$ onto two faces 
$\Sg_\pm$ of $\Sg$ are well-defined functions in $L^2(\Sg)$. 
These functions need 
not be the same and, thus, 
the jump of the trace $[u]_\Sg := u|_{\Sg_+} - u|_{\Sg_-}$ is a well-defined 
and non-trivial function in $L^2(\Sg)$. 
%

%

Our trace Hardy inequality holds for functions in the Sobolev space $H^1(\dR^d\setminus\ov\Sg)$. If the complement
$\G\sm\Sg$ is a $(d-1)$-set, then the Hardy weight
in this inequality can be controlled by the inverse of the geodesic distance $\rho_\Sg(\cdot)$. 
%
\begin{thm}\label{thm:bnd}
	Let $\Sg\subset\dR^d$ be a bounded non-closed Lipschitz hypersurface satisfying  Hypothesis~\ref{hyp:1}. Then there exists a constant $C = C(\Sg) > 0$ such that 
	\[
	\int_{\dR^d}|\nb u(x)|^2\dd x 
	\ge 
	C\int_\Sg
	w(x) |[u]_\Sg(x)|^2\dd\s(x),
	\qquad 
	\forall\, u\in H^1(\dR^d\setminus\ov\Sg),
	\]
	where the weight $w\colon\Sg\arr\dR_+$ is given by
	\begin{equation}\label{eq:HardyWeight}
	w(x) := \int_{\G\setminus\ov\Sg} \frac{\dd\s(y)}{|x-y|^d}
	\end{equation}
	If, in addition, $\G\sm\Sg$ is a $(d-1)$-set, then
	there exists $c >0$ such that
	$w(x) \ge c\wt\rho_\Sg(x)^{-1}\ge c\rho_\Sg(x)^{-1}$.
\end{thm}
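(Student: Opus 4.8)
The plan is to recognize the kernel $|x-y|^{-d}$ in the weight \eqref{eq:HardyWeight} as exactly the Gagliardo kernel of the fractional Sobolev seminorm of order $\tfrac12$ on the hypersurface $\G$: since $\dim\G=d-1$ and $(d-1)+2\cdot\tfrac12=d$, the natural object is $[g]_{1/2}^2:=\iint_{\G\times\G}\frac{|g(x)-g(y)|^2}{|x-y|^d}\dd\s(x)\dd\s(y)$. I would therefore reduce the whole inequality to the two trace estimates $\int_{\Omg_+}|\nb u|^2\dd x\ge c\,[g_+]_{1/2}^2$ and $\int_{\dR^d\setminus\ov{\Omg_+}}|\nb u|^2\dd x\ge c\,[g_-]_{1/2}^2$, where $g_+$ and $g_-$ denote the interior and exterior traces of $u$ on the whole of $\G$. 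Both rest on the Dirichlet principle (the harmonic extension minimises the Dirichlet energy, so its energy, the quadratic form of the Dirichlet-to-Neumann map, is dominated by $\int|\nb u|^2$) together with the equivalence of that form with $[\,\cdot\,]_{1/2}^2$; these are the Sobolev-space facts I would isolate from Section~\ref{sec:Sobolev}.

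Granting these, the main inequality is three short steps. Set $f:=g_+-g_-\in H^{1/2}(\G)$. By the defining property of $H^1(\dR^d\setminus\ov\Sg)$ the two traces agree on $\G\setminus\ov\Sg$, so $f$ vanishes there and equals $[u]_\Sg$ on $\Sg$. First, restricting the double integral defining $[f]_{1/2}^2$ to the cross terms $x\in\Sg,\ y\in\G\setminus\ov\Sg$ (and symmetrically), where $f(y)=0$, gives $[f]_{1/2}^2\ge 2\int_\Sg|[u]_\Sg(x)|^2\big(\int_{\G\setminus\ov\Sg}|x-y|^{-d}\dd\s(y)\big)\dd\s(x)=2\int_\Sg w(x)\,|[u]_\Sg(x)|^2\dd\s(x)$, which produces exactly the weight $w$. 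Second, the triangle inequality for the seminorm yields $[f]_{1/2}^2\le 2[g_+]_{1/2}^2+2[g_-]_{1/2}^2$. Third, adding the two trace estimates gives $\int_{\dR^d}|\nb u|^2\dd x\ge c\big([g_+]_{1/2}^2+[g_-]_{1/2}^2\big)\ge\tfrac{c}{2}[f]_{1/2}^2\ge c\int_\Sg w\,|[u]_\Sg|^2\dd\s$, which is the claim with $C(\Sg)=c$.

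The delicate point, and the reason the inequality survives in the non-typical dimension $d=2$, is the exterior trace estimate. For $d\ge 3$ it follows from the global (homogeneous) Sobolev embedding, which controls $u$ by $\nb u$ across all of $\dR^d$; but for $d=2$ there is no such embedding and the usual full trace theorem fails, since exterior $H^1$-functions of finite Dirichlet energy only tend to a constant at infinity. I would handle this uniformly by working with the seminorm rather than the full norm: the constant at infinity lies in the kernel of the exterior Dirichlet-to-Neumann map and is annihilated by $[\,\cdot\,]_{1/2}$, and a direct Fourier computation on a model exterior domain shows that the exterior Dirichlet-to-Neumann form is comparable to $[\,\cdot\,]_{1/2}^2$ in every dimension $d\ge 2$. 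The interior estimate is the standard one, obtained by subtracting the mean of $u$ on $\Omg_+$ and invoking Poincar\'e. This is where I expect the bulk of the technical work to sit.

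It remains to prove the geometric lower bound on $w$ under the $(d-1)$-set hypothesis. Fix $x\in\Sg$, set $\delta:=\wt\rho_\Sg(x)$, and pick $x^\star\in\p\Sg$ realising this Euclidean distance, so $|x-x^\star|=\delta$ and $x^\star\in\G\setminus\Sg$. For $\delta$ small the lower bound in \eqref{eq:(d-1)} applied at $x^\star$ with radius $\delta$ gives $\Lambda_{d-1}\big(\cB_\delta(x^\star)\cap(\G\setminus\Sg)\big)\ge c_-\delta^{d-1}$, and since $\cB_\delta(x^\star)\subset\cB_{2\delta}(x)$ one estimates $w(x)\ge(2\delta)^{-d}\Lambda_{d-1}\big(\cB_{2\delta}(x)\cap(\G\setminus\Sg)\big)\ge c_-2^{-d}\delta^{-1}=c\,\wt\rho_\Sg(x)^{-1}$. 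For $\delta$ bounded away from $0$ the boundedness of $\G$ gives a uniform positive lower bound on $w$ while $\wt\rho_\Sg(x)^{-1}$ stays bounded, so the same inequality holds after adjusting $c$. Finally, any path in $\Sg$ from $x$ to $\p\Sg$ has Euclidean length at least $\wt\rho_\Sg(x)$, whence $\rho_\Sg(x)\ge\wt\rho_\Sg(x)$ and therefore $\wt\rho_\Sg(x)^{-1}\ge\rho_\Sg(x)^{-1}$, completing the chain.
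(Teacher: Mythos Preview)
Your argument is correct and lands on the same Besov identification of $w$ and the same $(d-1)$-set estimate as the paper, but the route to controlling the $H^{1/2}$-seminorm of the jump by the Dirichlet energy is organised differently. You split $\dR^d$ into $\Omg_+$ and its exterior and bound $[g_\pm]_{1/2}^2$ separately, invoking the Dirichlet-to-Neumann form and, for the exterior in $d=2$, a model Fourier computation. The paper instead localises once to a bounded smooth domain $\Omg\supset\ov\Sg$ with $\Omg\setminus\ov\Sg$ connected, subtracts the mean of $u$ over $\Omg$, and uses the positivity of the second Neumann eigenvalue $\lm_2^{\rm N}(\Omg\setminus\ov\Sg)>0$ to bound the full $H^1(\Omg\setminus\ov\Sg)$-norm of $u-\langle u\rangle$ by $\|\nabla u\|_{L^2}$; then the standard full-norm trace theorem $H^1\to H^{1/2}(\G)$ on each side of $\G$ and the triangle inequality finish the job. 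This single-localisation device handles all $d\ge 2$ uniformly, with no separate treatment of the exterior and no appeal to DtN maps; indeed your exterior seminorm estimate in $d=2$ for a general Lipschitz $\G$ is most cleanly obtained by precisely this kind of localisation to a bounded annulus rather than by Fourier on a model ball. Conversely, your DtN framing makes transparent why only the seminorm --- and hence only the jump --- matters, and in principle delivers the inequality in the sharper homogeneous form, valid for $u$ with $\nabla u\in L^2(\dR^d)$ but not necessarily $u\in L^2(\dR^d)$.
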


The  condition that $\G\sm\Sg$ is a $(d-1)$-set calls for the following remarks and examples:

\begin{remark}
(i)  The upper bound $\Lambda_{d-1}\big(\cB_r(x)\cap \cF\big) \le c_+ r^{d-1}$ is always satisfied for $\cF=\G\sm\Sg$ since $\Gamma$ itself is a $(d-1)$-set.
\smallskip

(ii) If the uniform lower bound $c_- r^{d-1}\le \Lambda_{d-1}\big(\cB_r(x)\cap \cF\big)$ is satisfied with $\cF=\G\sm\Sg$ for any $x\in\partial\Sigma$, this implies a similar bound for all $x\in\Gamma\setminus\Sigma$.
\smallskip

(iii) If $\Gamma'$ is another closed Lipschitz hypersurface containing $\Sigma$, and if $\G\sm\Sg$ is a $(d-1)$-set, then $\G'\sm\Sg$ is a $(d-1)$-set too.
\end{remark}

\begin{example}
{\rm (i)} If $\Sigma$ is (moreover) a Lipschitz hypersurface with Lipschitz boundary (its boundary $\p\Sg$  
can be viewed as a $(d-2)$-dimensional Lipschitz manifold) so $\G\sm\Sg$ is a $(d-1)$-set.
\smallskip

{\rm (ii)} In the space dimension $d\! =\! 2$, $\G\sm\Sg$ is a $1$-set without any extra assumptions on $\Sg$.

{\rm (iii)} If $\Sigma$ has an inward cusp at a point $x_0$, then $\G\sm\Sg$ has an outward cusp at the same point. The lower bound $c_- r^{d-1}\le \Lambda_{d-1}\big(\cB_r(x)\cap \cF\big)$ is not satisfied with $\cF=\G\sm\Sg$ at this point $x=x_0$.
\end{example} 
The existence of a weight singular on $\p\Sigma$
is reminiscent of the Hardy
inequality for the Dirichlet Laplacian on a bounded
planar regular Euclidean domain~\cite[Thms. 5.3.5 and 5.3.6]{D95}.
We also remark that the inequality in Theorem~\ref{thm:bnd} can be generalized to the setting of
a cut in a Riemannian manifold. However, we do not pursue this goal here.

An inequality analogous to the trace Hardy inequality in Theorem~\ref{thm:bnd} is proven
in Theorem~\ref{thm:cone} for a class of unbounded cuts. These cuts are contained in the hyperplane $\G :=\dR^{d-1}\times\{0\}$, $d \ge 3$ and a cut from this class is defined as
\[
	\Sg = \left\{x\in\dR^d\sm\{0\}\colon \frac{x}{|x|}\in\hat\Sg\right\},
\]
where $\hat\Sg$ is an open set in $\hat\G := \G\cap\cS$ such that $\hat\G\sm\hat\Sg$ is a $(d-2)$-set. The proof of Theorem~\ref{thm:cone}
relies on splitting the Euclidean space $\dR^d$ by dyadic spherical shells, inside which the method
used in the proof of Theorem~\ref{thm:bnd} is applied.
Similar localization technique can be used
to show generalizations of Theorem~\ref{thm:bnd} for non-orientable cuts such as M\"{o}bius strips in $\dR^3$; see Example~\ref{ex:moebius}.

\section{The first order Sobolev space on a domain with a cut}
\label{sec:Sobolev}
In this section we recall several properties of and notions related to the Sobolev space on a domain with a cut.
Let the hypersurface $\Sg\subset\dR^d$ satisfy Hypothesis~\ref{hyp:1}. Recall that we implicitly
assume existence of a bounded Lipschitz domain $\Omg_+\subset\dR^d$ with the
boundary $\G := \p\Omg_+$ such that $\Sg$ is a relatively open subset of $\G$. 
From the topological point of view, $\G$ and $\Sg$ possess two faces $\G_\pm$ and $\Sg_\pm$, respectively.
Additionally, we assume that $\Sg$ is non-closed or in other terms the domain $\dR^d\setminus\ov\Sg$ is assumed to be connected.
Further, let $\Omg\subseteq\dR^d$ be an open set such that $\ov{\Omg_+}\subset\Omg$ and that $\Omg_- := \Omg\setminus\ov{\Omg_+}$ is a possibly unbounded Lipschitz domain of the type
described  in~\cite[\S VI.3]{St}. 
The case $\Omg = \dR^d$ and $\Omg_- = \dR^d\setminus\ov{\Omg_+}$
is for us the most typical.

For a function $u\in L^2(\Omg)$ we adopt the notation
$u_\pm := u|_{\Omg_\pm}$.
The Sobolev space $H^1(\Omg\setminus\ov\G) = H^1(\Omg_+) \oplus H^1(\Omg_-)$ is introduced in the conventional
way. For a function $u = u_+ \oplus u_-\in H^1(\Omg\setminus\ov\G)$ we define
$\nabla u := \nabla u_+\oplus \nabla u_-$.
The norm on $H^1(\Omg\setminus\ov\G)$ is given by
\begin{equation}\label{key}
	\|u\|_1^2 = \|u\|^2_{H^1(\Omg\setminus\ov\G)} := 
		\|u\|^2_{L^2(\Omg)} + \|\nabla u\|^2_{L^2(\Omg;\dC^d)}.
\end{equation}
Next, we define the sub-space of 
$H^1(\Omg\setminus\ov\G)$ 
\[
	\cC_{\rm c}^\infty(\Omg\setminus\ov\Sg) := 
	\big\{
	u = u_+\oplus u_-  \in 
	\cC^\infty_0(\ov{\Omg_+})
	\oplus\cC^\infty_0(\ov{\Omg_-})
	\colon \supp([u]_{\G}  ) \subset\ov{\Sg}
	\big\},
\]
where $[u]_{\G} := u_+|_{\G} - u_-|_{\G}$.
\begin{dfn}\label{def:SobolevCut}
	The first-order $L^2$-based Sobolev space 
	on the domain $\Omg\setminus\ov\Sg$
	is defined as the closure of $\cC_{\rm c}^\infty(\Omg\setminus\ov\Sg)$ in $H^1(\Omg\setminus\ov\G)$ with respect to its norm
	\begin{equation}\label{eq:Sobolev}
		H^1(\Omg\setminus\ov\Sg) = \ov{\cC_{\rm c}^\infty(\Omg\setminus\ov\Sg)}^{\|\cdot\|_1}.
	\end{equation}
\end{dfn}
\begin{remark}
	The above definition of the Sobolev space $H^1(\Omg\setminus\ov\Sg)$ is not intrinsic.
	The Sobolev space $H^1(\Omg\setminus\ov\Sg)$ can be
	alternatively intrinsically defined as the space of all square-integrable functions on $\Omg$ such that their distributional gradients on $\Omg\setminus\ov\Sg$ are also square-integrable. The intrinsic definition is for technical reasons less convenient. The equivalence between the two definitions can be shown via standard argument based on integration by parts.
\end{remark}
Note that it is easy to see that
the inclusions 
$H^1(\Omg)\subsetneq H^1(\Omg\setminus\ov\Sg)$ and
$H^1(\Omg\setminus\ov\Sg) \subsetneq H^1(\Omg_+)\oplus H^1(\Omg_-)$ are proper.

Under the assumptions imposed, the trace maps
\[
	H^1(\Omg\setminus\ov\Sg)\ni u\mapsto u|_{\Sg_\pm} \in L^2(\Sg)
\]
onto two different faces $\Sg_\pm$ of $\Sg$ are well-defined, continuous, linear operators.
These traces need not coincide for a generic element
of $H^1(\Omg\setminus\ov\Sg)$. 

The jump of the trace $[u]_{\G} := u_+|_{\G} - u_-|_{\G}$ extends by continuity to the Sobolev
space $H^1(\Omg\setminus\ov\G)$. It is convenient
to introduce the notation $[u]_\Sg$ and $[u]_{\G\setminus\ov\Sg}$
for the restrictions of $[u]_\G$ onto $\Sg$ and $\G\setminus\ov\Sg$, respectively.
%
\begin{prop}\label{prop:Sobolev}
	For any $u\in H^1(\Omg\setminus\ov\Sg)$
	one has $[u]_{\G\setminus\ov\Sg} = 0$.
\end{prop}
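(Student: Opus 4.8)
The plan is to exploit the defining density of $\cC_{\rm c}^\infty(\Omg\setminus\ov\Sg)$ together with the continuity of the jump trace. The key observation is that $u\mapsto[u]_{\G\setminus\ov\Sg}$ is a continuous linear map on $H^1(\Omg\setminus\ov\G)$ that vanishes identically on the dense subspace $\cC_{\rm c}^\infty(\Omg\setminus\ov\Sg)$; the statement then follows by the routine ``a bounded operator that vanishes on a dense subspace vanishes everywhere'' argument.

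First I would record that the two one-sided trace operators $u\mapsto u_\pm|_\G$ map $H^1(\Omg_\pm)$ continuously into $L^2(\G)$ (indeed into $H^{1/2}(\G)$) by the trace theorem for Lipschitz domains, applied to $\Omg_+$ and to the domain $\Omg_-$ of the type described in~\cite[\S VI.3]{St}. Consequently the jump $u\mapsto[u]_\G = u_+|_\G - u_-|_\G$ is a bounded linear operator from $H^1(\Omg\setminus\ov\G)$ to $L^2(\G)$, and so is its composition with restriction to the relatively open set $\G\setminus\ov\Sg$. This yields a bounded linear operator $T\colon H^1(\Omg\setminus\ov\G)\arr L^2(\G\setminus\ov\Sg)$, $Tu := [u]_{\G\setminus\ov\Sg}$.

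Next I would check that $T$ annihilates $\cC_{\rm c}^\infty(\Omg\setminus\ov\Sg)$. For $u$ in this space, one has $\supp([u]_\G)\subset\ov\Sg$ by definition. Since $\ov\Sg$ is closed in $\G$ and hence disjoint from the relatively open set $\G\setminus\ov\Sg$, the jump $[u]_\G$ vanishes on $\G\setminus\ov\Sg$, that is, $Tu = 0$. Finally, given an arbitrary $u\in H^1(\Omg\setminus\ov\Sg)$, Definition~\ref{def:SobolevCut} furnishes a sequence $(u_n)\subset\cC_{\rm c}^\infty(\Omg\setminus\ov\Sg)$ with $\|u_n - u\|_1\arr 0$. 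Continuity of $T$ gives $Tu_n\arr Tu$ in $L^2(\G\setminus\ov\Sg)$, while $Tu_n = 0$ for every $n$ by the previous step, whence $Tu = [u]_{\G\setminus\ov\Sg} = 0$.

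The only point requiring genuine care is the boundedness of $T$ into $L^2(\G\setminus\ov\Sg)$, which rests on the Lipschitz trace theorem for each of the domains $\Omg_\pm$; everything else is the standard closure argument, so I do not expect a real obstacle here.
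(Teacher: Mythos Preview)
Your argument is correct and follows the same route as the paper: use density of $\cC_{\rm c}^\infty(\Omg\setminus\ov\Sg)$ together with continuity of the jump trace to pass to the limit. The paper compresses this into a single line, relying on the continuity of $[\cdot]_\G$ already recorded just before the proposition, whereas you spell out the trace-theorem justification explicitly; there is no substantive difference.
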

\begin{proof}
	Let $u\in H^1(\Omg\setminus\ov\Sg)$ be fixed and let $(u_n)_n$ be a sequence in $\cC^\infty_{\rm c}(\Omg\setminus\ov\Sg)$ such that $\|u_n - u\|_1\arr 0$
	as $n\arr\infty$.
	Then by continuity we get
	\[
		[u]_{\G\setminus\ov\Sg} = \lim_{n\arr\infty}[u_n]_{\G\setminus\ov\Sg} = 0.
		\qedhere
	\]
\end{proof}	
%
The fractional Sobolev space $H^{1/2}(\G)$
is defined as in~\cite[\S 1.3.3]{Gr85}  via local charts (see also~\cite[Chap. 3]{McL}). 
There are several ways 
to define the norm on $H^{1/2}(\G)$. The most 
convenient for us is the following Besov-type norm
\begin{equation}\label{eq:Besov}
	\|\psi\|_{1/2}^2 
	=  \|\psi\|_{H^{1/2}(\G)}^2
	:= \|\psi\|_{L^2(\G)}^2 + 
	\int_\G\int_\G \frac{|\psi(x) - \psi(y)|^2}{|x-y|^d}\dd\s(x)\dd\s(y);
\end{equation}
\cf \cite[Eq. (1,3,3,3)]{Gr85}. According to~\cite[Thm. 1.5.1.2]{Gr85} the trace maps are continuous linear operators
\[
	H^1(\Omg\setminus\ov\G)\ni u\mapsto u|_{\G_\pm} \in H^{1/2}(\G).
\]
%

\section{Trace Hardy inequality for a bounded cut}
%
\label{sec:bndcut}
The aim of this section is to prove Theorem~\ref{thm:bnd}.
To this end we need an auxiliary lemma.

\begin{lem}\label{ine}
	Let $\Sg\subset\dR^d$ be a bounded Lipschitz hypersurface 
	satisfying Hypothesis \ref{hyp:1}.
	Let $\Omg\subset\dR^d$ be a bounded Lipschitz  
   domain such that $\ov{\Sg}\subset\Omg$.
	 Then the following statements hold.
	\begin{myenum}
		\item For any $\eps > 0$, there exists a constant $C_\eps = C_\eps(\Sg)>0$ such that
		\begin{equation*}
			\big\|[u]_\Sg\big\|^2_{L^2(\Sg)}
			\le
			\eps \|\nb u\|^2_{L^2(\dR^d;\dC^d)}
			+ C_\eps\|u\|^2_{L^2(\dR^d)}
		\end{equation*}
		holds for all $u\in H^1(\dR^d\setminus\ov\Sg)$. 
		\item 
		There exists a constant 
		$\wt C = \wt C(\Omg,\Sg) > 0$ such that
		for any $u \in H^1(\Omg\setminus\ov\Sg)$
		\begin{equation}\label{eq:trace_weight}
			\int_\Sg
			w(x)\big|[u]_\Sg(x)\big|^2\dd\s(x)
			\leq 
			\wt C\|u\|^2_{H^1(\Omg\setminus\ov\Sg)},
			\quad \text{with~~} w(x) := 
			\int_{\G\setminus\ov\Sg} \frac{\dd\s(x)}{|x-y|^d}.
		\end{equation}
		If, in addition, $\G\sm\Sg$ is a $(d-1)$-set in the sense of~\eqref{eq:(d-1)}, then there exists
		a constant $c > 0$ such that $w(x) \ge c\wt\rho_\Sg(x)^{-1} \ge c\rho_\Sg(x)^{-1}$ for all $x\in\Sg$. 
	\end{myenum}
\end{lem}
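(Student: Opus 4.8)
The plan is to recognize this as the standard $\eps$-trace inequality applied to each of the two faces of $\Sg$. First I would record the elementary collar estimate: after flattening $\G$ in local charts so that it becomes $\{t=0\}$ with a normal coordinate $t\in(0,\dl)$, a smooth function $v$ on the strip satisfies the one-dimensional identity
\[
|v(0)|^2 = \tfrac1\dl\int_0^\dl |v(t)|^2\dd t - \tfrac2\dl\int_0^\dl (\dl - s)\,\mathrm{Re}\big(\ov{v}(s)\,v'(s)\big)\dd s,
\]
and Young's inequality converts this into $|v(0)|^2 \le \eps\int_0^\dl|v'|^2 + C_\eps\int_0^\dl|v|^2$. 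Summing over a finite chart cover of the \emph{bounded} surface $\G$ yields, for each face, $\|u_\pm|_\G\|_{L^2(\G)}^2\le \eps\|\nb u_\pm\|^2 + C_\eps\|u_\pm\|^2$ with the right-hand norms taken over a bounded collar of $\G$. Restricting the trace to $\Sg\subset\G$, using $|[u]_\Sg|^2\le 2|u_+|_\Sg|^2+2|u_-|_\Sg|^2$, and bounding the collar norms by the full $\dR^d$ norms gives (i). The only point needing care is that $\Omg_-$ is unbounded, which is harmless because the collar of the bounded set $\G$ is bounded.

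\textbf{Part (ii), upper bound.} The crux is the observation that the weight $w$ is \emph{exactly} the inner integral produced by the Gagliardo/Besov seminorm of the jump, once one knows the jump vanishes off $\Sg$. Set $\psi := [u]_\G\in H^{1/2}(\G)$. By the trace theorem \cite[Thm.~1.5.1.2]{Gr85} on the bounded Lipschitz domains $\Omg_+$ and $\Omg_-=\Omg\setminus\ov{\Omg_+}$, one has $\|\psi\|_{1/2}\le C\|u\|_1$. By Proposition~\ref{prop:Sobolev}, $\psi=0$ on $\G\setminus\ov\Sg$, while $\psi|_\Sg=[u]_\Sg$. Restricting the nonnegative integrand in \eqref{eq:Besov} to $x\in\Sg$, $y\in\G\setminus\ov\Sg$ (where $\psi(y)=0$) gives
\[
\int_\Sg |[u]_\Sg(x)|^2\Big(\int_{\G\setminus\ov\Sg}\frac{\dd\s(y)}{|x-y|^d}\Big)\dd\s(x)
\le \int_\G\int_\G \frac{|\psi(x)-\psi(y)|^2}{|x-y|^d}\dd\s(x)\dd\s(y)
\le \|\psi\|_{1/2}^2,
\]
which is precisely \eqref{eq:trace_weight} with $\wt C = C^2$.

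\textbf{Part (ii), lower bound on $w$.} Here the $(d-1)$-set hypothesis enters through a single well-placed ball. Fix $x\in\Sg$, let $x_\star\in\p\Sg$ realize $\wt\rho := \wt\rho_\Sg(x)=|x-x_\star|$ (such $x_\star$ exists since $\p\Sg$ is compact), and note $x_\star\in\cF:=\G\sm\Sg$. For $\wt\rho<r_\star$, the lower bound in \eqref{eq:(d-1)} at $x_\star$ gives $\Lambda_{d-1}\big(\cB_{\wt\rho}(x_\star)\cap\cF\big)\ge c_-\wt\rho^{\,d-1}$, while every $y\in\cB_{\wt\rho}(x_\star)$ obeys $|x-y|\le|x-x_\star|+|x_\star-y|<2\wt\rho$. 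Since $\p\Sg$ is $\Lambda_{d-1}$-negligible I may integrate over $\cF$ in place of $\G\sm\ov\Sg$, so
\[
w(x) \ge \int_{\cB_{\wt\rho}(x_\star)\cap\cF}\frac{\dd\s(y)}{|x-y|^d}
\ge (2\wt\rho)^{-d}\,\Lambda_{d-1}\big(\cB_{\wt\rho}(x_\star)\cap\cF\big)
\ge 2^{-d}c_-\,\wt\rho^{\,-1}.
\]
For $\wt\rho\ge r_\star$ (only on a set at positive distance from $\p\Sg$, where $w$ is continuous and positive and $\wt\rho\le\mathrm{diam}\,\Sg$), $w$ is bounded below by a positive constant that dominates $c/\wt\rho$; combining the regimes gives $w(x)\ge c\,\wt\rho_\Sg(x)^{-1}$, and $\wt\rho_\Sg(x)^{-1}\ge\rho_\Sg(x)^{-1}$ is immediate from the Euclidean distance being at most the geodesic one.

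\emph{Main obstacle.} I expect the conceptual heart to be the upper bound in (ii): the clean reduction to the Gagliardo double integral, which couples the $H^{1/2}(\G)$-trace estimate with the vanishing of the jump on $\G\sm\ov\Sg$ so that $w$ appears verbatim. The lower bound is then a short localization whose only delicate points are centring the ball at the nearest boundary point $x_\star\in\p\Sg$ and the $\Lambda_{d-1}$-negligibility of $\p\Sg$, while (i) is entirely routine.
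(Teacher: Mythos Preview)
Your proposal is correct and follows essentially the same route as the paper: part (i) via the $\eps$-trace inequality on each face plus $|[u]_\Sg|^2\le 2|u_+|^2+2|u_-|^2$; the upper bound in (ii) via the trace theorem into $H^{1/2}(\G)$, Proposition~\ref{prop:Sobolev}, and restricting the Gagliardo double integral to $\Sg\times(\G\setminus\ov\Sg)$; and the lower bound on $w$ via a single ball centred at a nearest point $x_\star\in\p\Sg$ together with the $(d-1)$-set hypothesis. Your treatment is in fact slightly more careful than the paper's in two places: you explicitly dispose of the regime $\wt\rho_\Sg(x)\ge r_\star$ by compactness and continuity of $w$, and you flag the $\Lambda_{d-1}$-negligibility of $\p\Sg$ needed to pass between $\G\setminus\ov\Sg$ and $\G\setminus\Sg$ in the weight estimate.
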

\begin{proof}
	\noindent {\rm (i)}
	Let $u\in H^1(\dR^d\setminus\ov\Sg)$. 
	Set $\Omg_- := \dR^d\setminus\ov{\Omg_+}$
	and $u_\pm := u|_{\Omg_\pm}$.
	By elementary computations we get
	\begin{equation}\label{eq:elementary}
		\|[u]_\Sg\|^2_{L^2(\Sg)} \le 
		2\|u_+|_{\G}\|^2_{L^2(\G)}  + 2\|u_-|_{\G}\|_{L^2(\G)}^2.
	\end{equation}
	Applying the trace theorem~\cite{M87} (see also \cite[Lem 2.6]{BEL14}),
	we get that for any $\eps > 0$ there exist $C^\pm_\eps > 0$ such that
	\[
		2\|u_\pm|_{\G}\|^2_{L^2(\G)}
		\le 
		\eps \|\nabla u_\pm\|^2_{L^2(\Omg_\pm;\dC^d)} + C^\pm_\eps\|u_\pm\|^2_{L^2(\Omg_\pm)}. 	
	\]
	Summing the above two inequalities for $\Omg_+$ and
	$\Omg_-$, respectively,
	and combining with~\eqref{eq:elementary}
	we get the desired inequality with $C_\eps = \max\{C^+_\eps,C^-_\eps\}$.
	
	\noindent {\rm (ii)}
	Let $u\in H^1(\Omg\setminus\ov\Sg)$. 
	Set $\Omg_- := \Omg\setminus\ov{\Omg_+}$
	and $u_\pm := u|_{\Omg_\pm}$.
	By the Sobolev trace theorem~\cite[Thm 1.5.1.2]{Gr85} we get that there exists
	a constant $C = C(\G) > 0$ such that 
	\[
		\big\|[u]_\G\big\|_{H^{1/2}(\G)}
		\le 
		\big\|u_+|_\G\big\|_{H^{1/2}(\G)}
		+
		\big\|u_-|_\G\big\|_{H^{1/2}(\G)}
		\le C\big\|u\big\|_{H^1(\Omg\setminus\ov\Sg)}.
	\]
	Using the expression~\eqref{eq:Besov}
	for the norm $\|\cdot\|_{H^{1/2}}$
	and Proposition~\ref{prop:Sobolev} we get 
	\[
	\begin{aligned}
		\big\|[u]_\G\big\|_{H^{1/2}(\G)}^2
		& \ge
		\int_\G \int_\G \frac{\big|[u](x) - [u](y)\big|^2}{|x-y|^d}\dd\s(x)\dd\s(y)\\
		& \ge 
		2\int_\Sg
		w(x) 
		|[u](x)|^2\dd\s(x),
	\end{aligned}
	\]%
	where the weight $w\colon \Sg\arr\dR_+$ is
	as in~\eqref{eq:trace_weight}.
	Suppose now that the closed set $\Sg^{\rm c} := \G\sm\Sg$ is a $(d-1)$-set in the sense of~\eqref{eq:(d-1)}.
	Let us fix $x\in\Sg$ and set $r := 2\,{\rm dist}\,(x,\Sg^{\rm c}) =2\wt\rho_\Sg(x)$ where the distance is measured in $\dR^d$. Let $x_\star\in \Sg^{\rm c}$ be such that
	$|x-x_\star| = \frac{r}{2}$. Then we get the following
	lower bound on $w(x)$
	\[
	\begin{aligned}
		w(x)
		& \ge 		
		\int_{\cB_r(x)\cap \Sg^{\rm c}}
		\frac{\dd\s(y)}{|x-y|^d}
		\ge
		\frac{\Lambda_{d-1}(\cB_r(x)\cap \Sg^{\rm c})}{r^d}\\
		& \ge
		\frac{\Lambda_{d-1}(\cB_{\frac{r}{2}}(x_\star)
			\cap\Sg^{\rm c})}{r^d}
		\ge 
		\frac{c_-}{2^{d-1}r} =\frac{c}{\wt\rho_\Sg(x)} \ge
		\frac{c}{\rho_\Sg(x)};
	\end{aligned}
	\]
	where $c := \frac{c_-}{2^d}$. 
	In the course of these estimates we used that
	$\cB_{\frac{r}{2}}(x_\star)\subset\cB_r(x)$
	and that $\rho_\Sg(x) \ge \frac{r}{2}$.
\end{proof}	
Now we have all the tools to prove the desired functional inequality. For the sake of convenience 
we introduce the notation
\begin{equation}\label{eq:jumpform}
	\frh_\Sg(u) := \int_\Sg 
	w(x)|[u]_\Sg(x)|^2\dd \s(x).
\end{equation}
\begin{proof}[Proof of Theorem~\ref{thm:bnd}]
	Recall that $\Sg$ is a bounded  non-closed
	Lipschitz hypersurface as in Hypothesis~\ref{hyp:1}. 
	Let $\Omg\subset\dR^d$ be a bounded
	connected $\cC^\infty$-smooth domain
	such that the inclusion $\ov{\Sg}\subset\Omg$ holds and that $\Omg\setminus\ov{\Sg}$ is connected as well.
	For any $u \in H^1(\dR^d\setminus\ov\Sg)$
	we clearly have $\fru := u|_{\Omg}\in H^1(\Omg\setminus\ov\Sg)$ and moreover
	\begin{equation}\label{eq:est_grad1}
		\int_{\dR^d} |\nb u|^2\dd x
		\ge	
		\int_\Omg | \nabla \fru|^2\dd x.
	\end{equation}
	Since the domain $\Omg$ has a finite measure,
	Cauchy-Schwarz inequality implies
	$\fru\in L^1(\Omg)$. 
	Thus, the average 
	\[
		\avg{\fru} = \frac{1}{|\Omg|}\int_\Omg \fru(x) \dd x
	\]
	of $\fru$ is well defined.
	Rather elementary calculations give
	\begin{equation*}
		\left[\fru - \avg{\fru}\right]_\Sg = [\fru]_\Sg
		\and
		\nabla\big(\fru-\avg{\fru}\big)
		=
		\nabla\fru.
	\end{equation*}
	Note that the constant function on $\Omg$ is the eigenfunction corresponding to the lowest eigenvalue $\lm_1^{\rm N}(\Omg\setminus\ov\Sg) = 0$  of the Neumann Laplacian on $\Omg\setminus\ov\Sg$ and that $\fru -\avg{\fru}$ is orthogonal to it.
	Hence, by the min-max principle we can estimate the $L^2$-norm of the difference $\fru - \avg{\fru}$ in the following way
	\begin{equation*}
	\lm_2^{\rm N}(\Omg\setminus\ov\Sg)
	\|\fru- \avg{\fru}\|^2_{L^2(\Omg)}
	\leq\|\nabla(\fru-\avg{\fru} )\|^2_{L^2(\Omg;\dC^d)},
	\end{equation*}
	where $\lm_2^{\rm N}(\Omg\sm
	\ov\Sg)$ denotes the second eigenvalue of the Neumann Laplacian on the domain $\Omg\setminus\ov\Sg$. 
	Observe that $\lm_2^{\rm N}(\Omg\setminus\ov\Sg) > 0$, because
	the domain $\Omg\setminus\ov{\Sg}$ is connected and bounded.  
	
	Using the shorthand notation $\kp :=
	(\lm_2^{\rm N}(\Omg\setminus\ov\Sg))^{-1}$, we rewrite the inequality in Lem\-ma~\ref{ine}\,(ii) for the function $\fru-\avg{\fru}$ as follows
	\begin{equation*}
	\label{ner}
	\begin{split}
		\frh_\Sg[u]
		&=
		\int_\Sg w(x)|[\fru]_\Sg(x)|^2 \dd\s(x)=
		\int_\Sg w(x)|[\fru - \avg{\fru}]_\Sg(x)|^2 \dd\s(x)\\[0.4ex]
		& 
		\leq 
		\wt C
		\|\fru -\avg{\fru}\|_{H^1(\Omg\setminus\ov\Sg)}^2
		\leq 
		\wt C(1+\kp)
		\|\nabla(\fru-\avg{\fru})\|^2_{L^2(\Omg;\dC^d)}
		\\[0.4ex]
		&=
		\wt C\left(1+\kp\right)\|\nabla \fru\|^2_{L^2(\Omg;\dC^d)}
		 \le
		\wt C\left(1+\kp\right)\|\nb u\|^2_{L^2(\dR^d;\dC^d)}.
	\end{split}
	\end{equation*}
	Thus, we get the desired inequality
	with $C^{-1} = \wt C(1+\kp)$.
	
	If, in addition, the $\G\sm\Sg$ is a $(d-1)$-set in the sense of~\eqref{eq:(d-1)}, we get the inequality
	\[	
		\int_{\dR^d}|\nb u|^2\dd x  \ge
			C\int_\Sg\frac{|[u]_\Sg|^2}{\wt\rho_\Sg(x)}\dd x
		\ge
		C\int_\Sg\frac{|[u]_\Sg|^2}{\rho_\Sg(x)}\dd x.
		\qedhere
	\]
\end{proof}


\section{Applications}
\label{appl}
In this section we discuss applications
of the obtained inequality. 
First, in Subsection~\ref{ssec:heat} we apply our main result to the heat
equation. Second, in Subsection~\ref{ssec:delta}
we apply our main result to the Schr\"odinger operator
with a $\dl'$-interaction supported on a non-closed hypersurface.

\subsection{Applications to the propagation of heat}
\label{ssec:heat}

The trace Hardy inequality in Theorem~\ref{thm:bnd} finds a physically motivated 
application in the theory of the heat propagation. 
%


Let $\Sg\subset\dR^d$ be a bounded non-closed Lipschitz
hypersurface as in Section~\ref{sec:mainres}.
Consider the closed, non-negative, and densely defined sesquilinear form in
the Hilbert space $L^2(\dR^d)$

\begin{equation}\label{eq:form}
	\fra_\Sg(u,v) := \left(\nb u,\nb v\right)_{L^2(\dR^d;\dC^d)},
	\qquad 
	\dom \fra_\Sg := H^1(\dR^d\setminus\ov\Sg).
\end{equation}

The above sesquilinear form defines
by the first representation theorem a unique self-adjoint operator $\sfA_\Sg$ in the Hilbert space $L^2(\dR^d)$. This operator corresponds to the (minus) Laplace operator with Neumann boundary conditions on the cut $\Sg$.
In thermodynamics, the Neumann boundary condition
describes the heat insulator; 
\ie roughly speaking, a barrier through which the heat does not propagate. By~\cite[Thm. 1.49]{Ouh05}, the operator $\sfA_\Sg$  generates a strongly continuous contraction semigroup $e^{-t\sfA_\Sg}$ on $L^2(\dR^d)$. This semigroup can be defined  \eg through the functional calculus for self-adjoint operators. 
Our main interest is the large-time behaviour 
of the function 
\[
	t\mapsto\frh_\Sg\left(e^{-t\sfA_\Sg} u\right)
	= \int_{\Sg} w(x)[e^{-t\sfA_\Sg}u]_\Sg(x)\dd \s(s),
\]	
where $w$ is as in~\eqref{eq:HardyWeight}.
This function can be used to measure the jump of the temperature
across the insulating hypersurface $\Sg$ at the time
$t> 0$.
Employing the Hardy inequality in Theorem~\ref{thm:bnd}
we get an integral estimate on $\frh_\Sg\left(u(\cdot,t)\right)$. 
\begin{thm}\label{prop:heat_bnd}
	Let $\Sg\subset\dR^d$ be a bounded  Lipschitz 
	hypersurface as in Section~\ref{sec:mainres}. Let the self-adjoint operator $\sfA_\Sg$ be associated with the form~\eqref{eq:form}. Then there exists a constant $C = C(\Sg) > 0$
	such that
	\[
		\int_0^\infty\frh_\Sg(e^{-t\sfA_\Sg} u) \dd t \le 
		C
		\|u\|^2_{L^2(\dR^d)}
	\] 
	holds for all $u \in H^1(\dR^d\setminus\ov\Sg)$.
\end{thm}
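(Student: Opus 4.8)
The plan is to combine the trace Hardy inequality of Theorem~\ref{thm:bnd} with the elementary spectral calculus for the non-negative self-adjoint operator $\sfA_\Sg$, the whole argument amounting to the classical energy-dissipation identity for a heat semigroup. Write $u_t := e^{-t\sfA_\Sg}u$. For every $t > 0$ the semigroup maps $L^2(\dR^d)$ into $\dom\sfA_\Sg\subset\dom\fra_\Sg = H^1(\dR^d\setminus\ov\Sg)$, so that $u_t$ is an admissible test function for Theorem~\ref{thm:bnd}; for $t = 0$ this holds by the standing hypothesis $u\in H^1(\dR^d\setminus\ov\Sg)$. Applying Theorem~\ref{thm:bnd} to $u_t$ yields, with the Hardy constant $C = C(\Sg)$,
\[
	\frh_\Sg(u_t) \le \frac1C\int_{\dR^d}|\nb u_t|^2\dd x = \frac1C\,\fra_\Sg(u_t,u_t), \qquad t \ge 0 .
\]

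First I would settle measurability so that the left-hand integral makes sense. Since $u\in\dom\fra_\Sg = \dom\sfA_\Sg^{1/2}$ and $\sfA_\Sg^{1/2}u_t = e^{-t\sfA_\Sg}\sfA_\Sg^{1/2}u$, strong continuity of the semigroup makes $t\mapsto u_t$ continuous from $[0,\infty)$ into $H^1(\dR^d\setminus\ov\Sg)$. By Theorem~\ref{thm:bnd} the map $u\mapsto\sqrt{\frh_\Sg(u)}$ is a seminorm dominated by $\|\nb u\|_{L^2(\dR^d;\dC^d)}$, whence $t\mapsto\frh_\Sg(u_t)$ is continuous, hence measurable, and $\int_0^\infty\frh_\Sg(u_t)\dd t$ is well defined in $[0,\infty]$.

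The core computation is the evaluation of $\int_0^\infty\fra_\Sg(u_t,u_t)\dd t$. Using $\dom\fra_\Sg = \dom\sfA_\Sg^{1/2}$ and $\fra_\Sg(u_t,u_t) = \|\sfA_\Sg^{1/2}e^{-t\sfA_\Sg}u\|^2_{L^2(\dR^d)}$, I would pass to the scalar spectral measure $\mu_u$ of $\sfA_\Sg$ at $u$, so that the spectral theorem gives $\|\sfA_\Sg^{1/2}e^{-t\sfA_\Sg}u\|^2 = \int_{[0,\infty)}\lm e^{-2t\lm}\dd\mu_u(\lm)$. Integrating in $t$, exchanging the order of integration by Tonelli's theorem (the integrand being non-negative), and using $\int_0^\infty\lm e^{-2t\lm}\dd t = \tfrac12$ for $\lm > 0$ while the value $\lm = 0$ contributes nothing, I obtain
\[
	\int_0^\infty\fra_\Sg(u_t,u_t)\dd t = \tfrac12\,\mu_u\big((0,\infty)\big) \le \tfrac12\,\|u\|^2_{L^2(\dR^d)} .
\]

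Combining the two displays proves the theorem with constant $\tfrac{1}{2C}$, where $C$ is the constant of Theorem~\ref{thm:bnd}. I do not expect a serious obstacle: the only points genuinely needing care are the verification that $u_t$ lies in the form domain for each $t$ (so that the Hardy inequality may be applied pointwise in $t$) and the justification of the Tonelli exchange, both of which are routine once the spectral representation is in place. The mild subtlety worth flagging is that the bound is insensitive to $\ker\sfA_\Sg$: the constant mode produces no temperature jump and is annihilated by $\sfA_\Sg^{1/2}$, which is precisely why the $\lm = 0$ part drops out and the finite bound $\tfrac12\|u\|^2_{L^2(\dR^d)}$ survives despite the semigroup not decaying in $L^2$.
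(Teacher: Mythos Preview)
Your proof is correct and follows the same overall strategy as the paper: apply the trace Hardy inequality of Theorem~\ref{thm:bnd} pointwise in $t$ to reduce the claim to bounding $\int_0^\infty \fra_\Sg(e^{-t\sfA_\Sg}u)\,\dd t$, and then show that this time-integral is at most $\tfrac12\|u\|^2_{L^2(\dR^d)}$. The only difference lies in how that last bound is obtained. The paper derives the energy-dissipation identity $\frac{\dd}{\dd t}\|e^{-t\sfA_\Sg}u\|^2_{L^2(\dR^d)} = -2\fra_\Sg(e^{-t\sfA_\Sg}u)$ by passing through the extension $\cA_\Sg$ of the generator to the dual $\cV'$ of the form domain (invoking results from~\cite{Ouh05,Da07} on differentiability of the extended semigroup and the second representation theorem) and then integrates in $t$. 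You bypass this by writing $\fra_\Sg(u_t,u_t)=\int_{[0,\infty)}\lambda e^{-2t\lambda}\,\dd\mu_u(\lambda)$ via the spectral theorem and evaluating the $t$-integral by Tonelli. Your route is a bit more direct, avoiding the $\cV'$-machinery and the differentiability lemmas; the paper's route makes the physical energy-dissipation mechanism explicit. Both yield the same constant and the same conclusion.
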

%
\begin{proof}
	We denote by $\cV'$ the dual space to the Sobolev space $\cV := H^1(\dR^d\setminus\ov\Sg)$ and by $\langle\cdot,\cdot\rangle$ the corresponding duality product, which is compatible with the inner product $(\cdot,\cdot)_{L^2(\dR^d)}$. Clearly, we have the chain of inclusions
	\[
	\cV\subset L^2(\dR^d)\subset \cV'.
	\]
	The form $\fra_\Sg$ defines (see~\cite[Section 1.4.2]{Ouh05}) a unique linear operator
	$\cA_\Sigma$ in the Banach space $\cV'$ with $\dom\cA_\Sg = \cV$ such that
	\begin{equation}\label{eq:cA_Sg}
		\fra_\Sg(u,v) = \langle\cA_\Sg u, v \rangle,
		\qquad\forall\, u,v\in\dom\fra_\Sg.
	\end{equation}
   By~\cite[Thm. 1.55]{Ouh05} $\cA_\Sg$
	generates a strongly continuous  semigroup $e^{-t\cA_\Sg}$ on $\cV'$, which is compatible with the semigroup $e^{-t\sfA_\Sg}$ in the following sense 
	\begin{equation}\label{eq:compatibility}
	e^{-t\cA_\Sg}u = e^{-t\sfA_\Sg}u,\qquad \forall\, u\in L^2(\dR^d).
	\end{equation}
	According to~\cite[Lem. 6.1.11]{Da07} we have $e^{-t\cA_\Sg}u\in \cV$ for any $u\in \cV$. Moreover, by~\cite[Lem. 6.1.13]{Da07} the function $t\mapsto e^{-t\cA_\Sg}u$  is 
	norm continuously differentiable on $[0,\infty)$ and
	\begin{equation}\label{eq:derivative}
	\frac{\dd}{\dd t} e^{-t\cA_\Sg}u = 
	-\cA_\Sg e^{-t\cA_\Sg}u.
	\end{equation}	
	For any $u\in\cV$ we get using~\eqref{eq:cA_Sg},~\eqref{eq:compatibility} and~\eqref{eq:derivative} 
	\[
	\begin{aligned}
	\frac{\dd}{\dd t}\|e^{-t\sfA_\Sg}u\|^2_{L^2(\dR^d)}& =
	\frac{\dd}{\dd t}( e^{-2t\sfA_\Sg}u,u)_{L^2(\dR^d)}
	=
	\frac{\dd}{\dd t}\langle e^{-2t\cA_\Sg}u,u\rangle
	=
	-2\langle \cA_\Sg e^{-2t\cA_\Sg}u,u\rangle\\
	& =
	-2\fra_\Sg(e^{-2t\sfA_\Sg}u,u).
	\end{aligned}
\]
Applying the second representation theorem~\cite[Thm. VI 2.23]{Kato}
\[	
\begin{aligned}
	\frac{\dd}{\dd t}\|e^{-t\sfA_\Sg}u\|^2_{L^2(\dR^d)}
	& =-2(\sfA_\Sg^{1/2}e^{-2t\sfA_\Sg},\sfA_\Sg^{1/2} u)_{L^2(\dR^d)} \\
	& = 
	-2(\sfA_\Sg^{1/2}e^{-t\sfA_\Sg}u,\sfA_\Sg^{1/2}e^{-t\sfA_\Sg} u)_{L^2(\dR^d)}
	= -2\fra_\Sg(e^{-t\sfA_\Sg}u).
\end{aligned}\]
Integrating against $t$ the above equation on the interval $(0,\infty)$ and using the inequality in Theorem~\ref{thm:bnd} and the fact that $e^{-t\sfA_\Sg}$ is a contraction semigroup we get
that there exists a constant $C > 0$ such that
\[
	C\int_0^\infty\frh_\Sg(e^{-t\sfA_\Sg} u)
	\le -\int_0^\infty \frac{\dd }{\dd t}\|e^{-t\sfA_\Sg} u\|^2_{L^2(\dR^d)}\dd t
	\le \|u\|^2_{L^2(\dR^d)},
\] 
by which the theorem is proved.
\end{proof}


\subsection{Application to $\delta'$-interacion}
\label{ssec:delta}
Using Theorem~\ref{thm:bnd} we can show the absence of negative discrete spectrum for Hamiltonians describing weak $\delta'$-interactions supported on non-closed hypersurfaces. 

First, we briefly recall how these operators are defined. For a more complete discussion we refer reader to~\cite{BEL14, BLL13AHP} for the case of closed hypersurfaces or to \cite{ER16, JL16} for the non-closed case. Let $\Omega_+$ be a bounded Lipschitz domain as in
Section~\ref{sec:mainres} such that $\Sigma$ is a relatively open connected subset of its boundary $\G := \p\Omg_+$. We introduce the quadratic form
\begin{equation}\label{eq:deltacl}
	\fra_{\omg,\G}(u)
	 =
	\left\|
		\nabla u
	\right\|^2_{L^2(\dR^d;\dC^d)}
	-
	\left(\omg [u]_{\Sg}, [u]_{\Sg}\right)_{L^2(\Sg)},\qquad
	\dom\fra_{\omg,\G}  := H^1(\dR^d\setminus\ov\G),
\end{equation}
where $\omg\in L^\infty(\Sg;\dR)$ denotes the coupling coefficient. The above form is
closed, densely defined, symmetric and lower semi-bounded; \cf \cite[Prop. 3.1]{BEL14}. As the next step, we introduce   a restriction of the form \eqref{eq:deltacl}
\begin{equation*}
	\fra_{\omg,\Sg}(u)
	=
	\fra_{\omg,\G}(u),
	\qquad \dom\fra_{\omg,\Sg}:=\{u\in \dom\fra_{\omg,\Sg}\colon [u]_{\G\setminus\ov\Sg}=0\}.
\end{equation*}
This form is obviously closed, densely defined, symmetric and lower semi-bounded. The operator $\sfA_{\omg,\Sg}$ describing $\delta'$-interaction is defined as the unique self-adjoint operator representing the form $\fra_{\omg,\Sg}$ in the usual manner. Now we are ready to state the main
result on $\dl'$-interactions. 

\begin{prop}\label{prop:delta}
	Let $\Sg\subset\dR^d$ be a bounded non-closed Lipschitz 
	hypersurface satisfying Hypothesis \ref{hyp:1}.
	Then there is a constant $\omg_{\star}  =\omg_{\star}(\Sg) > 0$ such that 
	$\sfA_{\omg,\Sg}\ge 0$
	holds if $\omg(x)\le\omg_{\star}$ for all $x\in\Sg$.
\end{prop}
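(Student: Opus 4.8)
The plan is to deduce the operator inequality $\sfA_{\omg,\Sg}\ge 0$ from the non-negativity of its form: since $\sfA_{\omg,\Sg}$ represents $\fra_{\omg,\Sg}$, it suffices to show $\fra_{\omg,\Sg}(u)\ge 0$ for every $u$ in the form domain, which coincides with $H^1(\dR^d\setminus\ov\Sg)$ (the constraint $[u]_{\G\setminus\ov\Sg}=0$ built into $\dom\fra_{\omg,\Sg}$ being exactly the defining property of this space, cf.\ Proposition~\ref{prop:Sobolev}). Unwinding \eqref{eq:deltacl}, this is precisely the bound
\[
\int_{\dR^d}|\nb u|^2\dd x \ge \int_\Sg \omg(x)\,|[u]_\Sg(x)|^2\dd\s(x), \qquad \forall\, u\in H^1(\dR^d\setminus\ov\Sg).
\]
Because $|[u]_\Sg(x)|^2\ge 0$ pointwise, the hypothesis $\omg(x)\le\omg_\star$ lets me replace $\omg(x)$ by the constant $\omg_\star$ on the right-hand side, so it is enough to dominate $\int_\Sg|[u]_\Sg|^2\dd\s$ by the gradient norm up to a fixed constant; no sign assumption on $\omg$ is needed.

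The crux is a uniform strictly positive lower bound for the Hardy weight $w$ of \eqref{eq:HardyWeight}. Here I would exploit only the boundedness of $\G$: writing $D:={\rm diam}(\G)<\infty$ one has $|x-y|\le D$ for all $x,y\in\G$, whence
\[
w(x) = \int_{\G\setminus\ov\Sg}\frac{\dd\s(y)}{|x-y|^d} \ge \frac{\Lambda_{d-1}(\G\setminus\ov\Sg)}{D^d} =: w_\star, \qquad \forall\, x\in\Sg.
\]
Since $\Sg$ is non-closed, $\G\setminus\ov\Sg$ is a non-empty relatively open subset of the Lipschitz hypersurface $\G$, so $\Lambda_{d-1}(\G\setminus\ov\Sg)>0$ and hence $w_\star>0$. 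Note that this bound ignores the singular behaviour of $w$ near $\p\Sg$; it uses nothing but boundedness of $\G$ together with the positivity of the measure of the complement of $\Sg$ in $\G$.

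Combining the previous step with Theorem~\ref{thm:bnd}, which furnishes a constant $C=C(\Sg)>0$ such that $\int_{\dR^d}|\nb u|^2\dd x \ge C\,\frh_\Sg(u)$, I obtain
\[
\int_{\dR^d}|\nb u|^2\dd x \ge C\int_\Sg w(x)\,|[u]_\Sg(x)|^2\dd\s(x) \ge C\,w_\star\int_\Sg |[u]_\Sg(x)|^2\dd\s(x).
\]
Setting $\omg_\star:=C\,w_\star>0$, the assumption $\omg\le\omg_\star$ then gives $\int_\Sg\omg\,|[u]_\Sg|^2\dd\s \le \omg_\star\int_\Sg|[u]_\Sg|^2\dd\s \le \int_{\dR^d}|\nb u|^2\dd x$, i.e.\ $\fra_{\omg,\Sg}(u)\ge 0$, proving the proposition. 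I do not expect any genuine obstacle: once Theorem~\ref{thm:bnd} is available the whole content reduces to the elementary lower bound $w\ge w_\star$, and the $\dl'$-statement is essentially a corollary of the trace Hardy inequality.
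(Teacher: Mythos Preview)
Your proof is correct and follows essentially the same approach as the paper: both apply Theorem~\ref{thm:bnd} and then use that the Hardy weight $w$ is uniformly bounded away from zero on $\Sg$. The paper's proof simply sets $\omg_\star := \inf_{x\in\Sg} w(x)$ and asserts it is positive (implicitly absorbing the constant $C$), whereas you justify this explicitly via the diameter bound $w(x)\ge \Lambda_{d-1}(\G\setminus\ov\Sg)/\operatorname{diam}(\G)^d>0$, which is arguably a welcome addition.
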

\begin{proof}
Applying Theorem~\ref{thm:bnd} we find that
\begin{equation}
\label{proof_delta'}
	\fra_{\omg,\Sg}(u)=	\left\|
		\nabla u
	\right\|^2_{L^2(\dR^d;\dC^d)}
	-
	\int_\Sg\omega |[u]_\Sg|^2\dd \s(x)  \ge
	\int_\Sg (w-\omega) |[u]_\Sg|^2\dd \s(x),
\end{equation}
where $w$ is as in Theorem~\ref{thm:bnd}. We choose $\omg_{ \star} :=\inf_{x\in\Sg}w(x) > 0$. The right hand side of \eqref{proof_delta'} is certainly non-negative provided $\omg_{\star}\geq\omega$ on $\Sg$,  which completes the proof.
\end{proof}	
The above proposition yields the following elementary consequence.
\begin{cor}
	Let $\Sg\subset\dR^d$ be a bounded non-closed Lipschitz 
	hypersurface satisfying Hypothesis \ref{hyp:1}.
	Let the coupling coefficient be constant $\omg\in\dR$.
	Then there is a critical coupling coefficient $\omg_{\rm c}  =\omg_{\rm c}(\Sg) > 0$ such that 
	$\sfA_{\omg,\Sg}\ge 0$
	holds if and only if $\omg\in(-\infty,\omg_{\rm c}]$.
\end{cor}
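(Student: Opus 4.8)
The plan is to define $\omg_{\rm c}$ as the supremum of the admissible couplings and to check that the admissible set is precisely a closed half-line. Throughout I use that, $\sfA_{\omg,\Sg}$ being the self-adjoint operator representing the lower semi-bounded form $\fra_{\omg,\Sg}$, the condition $\sfA_{\omg,\Sg}\ge 0$ is equivalent to $\fra_{\omg,\Sg}(u)\ge 0$ for all $u\in H^1(\dR^d\setminus\ov\Sg)$. Concretely, I would put
\[
	\omg_{\rm c} := \sup\bigl\{\omg\in\dR\colon \sfA_{\omg,\Sg}\ge 0\bigr\},
\]
and analyse the admissible set $S := \{\omg\in\dR\colon \sfA_{\omg,\Sg}\ge 0\}$. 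For constant $\omg$ the form reads $\fra_{\omg,\Sg}(u) = \|\nabla u\|^2_{L^2(\dR^d;\dC^d)} - \omg\|[u]_\Sg\|^2_{L^2(\Sg)}$, and since the jump term is non-negative the map $\omg\mapsto\fra_{\omg,\Sg}(u)$ is non-increasing for each fixed $u$. Hence $S$ is a down-set: $\omg\in S$ and $\omg'\le\omg$ force $\omg'\in S$, so $S$ is an interval unbounded below. It then remains to locate its right endpoint and to decide whether it is attained.

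First I would show that $\omg_{\rm c}$ is strictly positive and finite. Positivity is immediate from Proposition~\ref{prop:delta} applied to the constant coupling $\omg\equiv\omg_\star := \inf_{x\in\Sg}w(x) > 0$, which gives $\omg_\star\in S$ and hence $\omg_{\rm c}\ge\omg_\star > 0$. For finiteness I would exhibit one test function $u_0\in H^1(\dR^d\setminus\ov\Sg)$ with a non-trivial jump, $\|[u_0]_\Sg\|_{L^2(\Sg)} > 0$; such a $u_0$ exists because the inclusion $H^1(\dR^d)\subsetneq H^1(\dR^d\setminus\ov\Sg)$ is proper, as recorded in Section~\ref{sec:Sobolev}. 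For this fixed $u_0$ one has $\fra_{\omg,\Sg}(u_0) < 0$ as soon as $\omg > \|\nabla u_0\|^2_{L^2(\dR^d;\dC^d)}\big/\|[u_0]_\Sg\|^2_{L^2(\Sg)}$, so $\sfA_{\omg,\Sg}\not\ge 0$ for all such $\omg$ and therefore $\omg_{\rm c} < \infty$.

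Finally I would verify that the endpoint is attained, i.e. $\omg_{\rm c}\in S$, which upgrades the admissible set from $(-\infty,\omg_{\rm c})$ to $(-\infty,\omg_{\rm c}]$. By the definition of the supremum together with the down-set property, $(-\infty,\omg_{\rm c})\subseteq S$, so $\fra_{\omg,\Sg}(u)\ge 0$ for every $u\in H^1(\dR^d\setminus\ov\Sg)$ and every $\omg<\omg_{\rm c}$. For each fixed $u$ the function $\omg\mapsto\fra_{\omg,\Sg}(u)$ is affine in $\omg$, hence continuous; letting $\omg\uparrow\omg_{\rm c}$ gives $\fra_{\omg_{\rm c},\Sg}(u)\ge 0$, and as $u$ is arbitrary we conclude $\sfA_{\omg_{\rm c},\Sg}\ge 0$, i.e. $\omg_{\rm c}\in S$. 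Combining the three steps yields $S = (-\infty,\omg_{\rm c}]$ with $\omg_{\rm c}\in(0,\infty)$, which is exactly the claimed equivalence $\sfA_{\omg,\Sg}\ge 0\Leftrightarrow\omg\le\omg_{\rm c}$. The only non-formal point is the finiteness step, where one must guarantee a form-domain function with non-vanishing trace jump; the other steps reduce to monotonicity in $\omg$, a direct appeal to Proposition~\ref{prop:delta}, and continuity of an affine function of the parameter.
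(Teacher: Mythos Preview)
Your argument is correct and is precisely the natural elaboration of what the paper leaves implicit: the paper states the corollary as an ``elementary consequence'' of Proposition~\ref{prop:delta} and gives no proof at all, so there is nothing to compare against beyond confirming that your monotonicity/supremum argument, together with the existence of a form-domain element with non-trivial jump (guaranteed by the proper inclusion $H^1(\dR^d)\subsetneq H^1(\dR^d\setminus\ov\Sg)$ noted in Section~\ref{sec:Sobolev}), supplies exactly the missing details.
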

\begin{remark}
	If we consider a family of nested Lipschitz hypersurfaces $(\Sg_\eps)_{\eps > 0}$ in $\G$ with $\Sg_{\eps'} \supset\Sg_\eps$ for $\eps' < \eps$  such that $\G\setminus\ov\Sg_\eps$ shrinks to a point $x_0$  in $\G$, then we expect that the critical coupling coefficient $\omg_{\rm c}(\Sg_\eps)\arr 0$ as $\eps \arr 0$. 
If the family of hypersurfaces $(\Sg_\eps)_{\eps > 0}$ enjoys some self-similarity properties around $x_0$, 
asymptotic expansion of $\omg_{\rm c}(\Sg_\eps)$ in the small parameter $\eps >0$ can be determined  using the technique of~\cite{BFTT12, DTV10}.
\end{remark}

\section{Generalizations to unbounded and to non-orientable cuts}\label{sec:unb}
So far, we have considered only the situation when the cut is bounded and orientable. In this section we will discuss a class of unbounded cuts. The same technique is applicable for cuts given by non-orientable hypersurfaces.

As a warm-up we will derive from~\eqref{eq:Hardy_trace} a simple trace Hardy inequality for a not necessarily bounded cut contained in a hyperplane.
On the Euclidean space $\dR^d$, $d \ge 3$ we define the coordinates $x = (x',x_d)^\top$ with $x'\in\dR^{d-1}$ and $x_d\in\dR$. We also define the upper and lower half-spaces 
\[
	\dR^d_\pm := \{(x',x_d)\colon x'\in\dR^{d-1}, \pm x_d > 0\}.
\]
The common boundary of $\dR^d_+$ and $\dR^d_-$ is a hyperplane
$\G := \dR^{d-1}\times \{0\}$. Let $\Sg$ be a relatively open subset of $\G$, which is not assumed to be bounded. The Sobolev space $H^1(\dR^d\sm\ov\Sg)$ can be defined in a similar way as in Subsection~\ref{sec:Sobolev}
\[
	H^1(\dR^d\sm\ov\Sg) = \big\{u = u_+\oplus u_- \in H^1(\dR^d_+)\oplus H^1(\dR^d_-)\colon
	u_+|_{\G\sm\ov\Sg} = u_-|_{\G\sm\ov\Sg}\big\}.
\]
For any $u\in H^1(\dR^d\sm\ov\Sg)$, we find using the inequality~\eqref{eq:Hardy_trace} that
\begin{align}
	\int_{\dR^d}|\nabla u|^2\dd x&  = 
	\int_{\dR^d_+}|\nabla u|^2\dd x  +
	\int_{\dR^d_-}|\nabla u|^2\dd x \nonumber\\
	& \ge 2\left(\frac{\G(\frac{d}{4})}{\G(\frac{d-2}{4})}\right)^2\int_\G\left(
	\frac{|u_+|_{\G}|^2}{|x'|} + \frac{|u_-|_{\G}|^2}{|x'|}\right)\dd x' \nonumber\\
	\label{eq:Rd-1}
	& \ge
	\left(\frac{\G(\frac{d}{4})}{\G(\frac{d-2}{4})}\right)^2\int_\Sg
	\frac{\big|u_+|_{\Sg} - u_-|_{\Sg}\big|^2}{|x'|}\dd x'.
\end{align}
Modifying the technique of the proof of Theorem~\ref{thm:bnd} we obtain
for a class of unbounded cuts an improvement  of the above trace Hardy inequality with the weight which is singular in the neighbourhood of $\p\Sg$ and behaves at infinity far from $\p\Sg$ as $|x'|^{-1}$. 
Recall that $\cS$ denotes the unit sphere of $\dR^d$ and also that $\G := \dR^{d-1}\times \{0\}$, $d\ge 3$, and set $\hat\Gamma:= \Gamma\cap\cS$.
Let $\hat\Sigma$ be an open set in $\hat\Gamma$ such that $\hat\Gamma\setminus\hat\Sigma$ is a $(d-2)$-set.
Define the cone $\Sigma$ as
\begin{equation}\label{eq:cone}
   \Sigma = \left\{x\in\dR^d\setminus\{0\}\vl{\colon} \frac{x}{|x|} \in \hat\Sigma\right\}.
\end{equation}
Let $\rho_\Sigma$ be the geodesic distance in $\Sg$ to $\partial\Sigma$. We also remark that it coincides with a counterpart Euclidean distance $\wt\rho_\Sg$.

\begin{thm}
\label{thm:cone}
Let the cone $\Sg\subset\dR^d$ be as in~\eqref{eq:cone}.
Then there exists a constant $C = C(\Sg) > 0$ such that 
	\[
	\int_{\dR^d}|\nb u(x)|^2\dd x 
	\ge 
	C\int_\Sg
	\rho_\Sg(x')^{-1} |[u]_\Sg(x')|^2\dd x',
	\qquad 
	\forall\, u\in H^1(\dR^d\setminus\ov\Sg).
	\]
\end{thm}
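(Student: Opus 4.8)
The plan is to exploit the exact scale invariance of the inequality together with a dyadic decomposition, so that everything reduces to a single bounded-cut estimate of the type already proved in Theorem~\ref{thm:bnd}. Because $\Sg$ is a cone it is invariant under the dilations $x\mapsto\lm x$, $\lm>0$, and its boundary $\p\Sg$ is a cone as well, so the geodesic distance scales linearly, $\rho_\Sg(\lm x)=\lm\,\rho_\Sg(x)$. Since $d\ge 3$, the Dirichlet energy $\int_{\dR^d}|\nb u|^2\dd x$ transforms under $x\mapsto\lm x$ by the factor $\lm^{d-2}$, and the weighted jump term $\int_\Sg\rho_\Sg^{-1}|[u]_\Sg|^2\dd\s$ transforms by $\lm^{d-1}\cdot\lm^{-1}=\lm^{d-2}$ as well; thus the asserted inequality is dilation invariant. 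I therefore introduce the closed tiling shells $A_n:=\{x\in\dR^d:2^n\le|x|\le 2^{n+1}\}$ and the enlarged open shells $A_n^\ast:=\{x:2^{n-1}<|x|<2^{n+2}\}$ for $n\in\mathbb{Z}$. The $A_n$ cover $\dR^d\sm\{0\}$ and overlap only on spheres of measure zero, whereas every point of $\dR^d$ belongs to at most three of the sets $A_n^\ast$. Moreover $2^n A_0=A_n$, $2^n A_0^\ast=A_n^\ast$, and $2^n\Sg=\Sg$.

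The core is a single \emph{reference inequality}: there is a constant $c_0>0$ such that
\[
\int_{\Sg\cap A_0}\rho_\Sg(x)^{-1}\,|[v]_\Sg(x)|^2\dd\s(x)\ \le\ c_0\int_{A_0^\ast}|\nb v|^2\dd x
\qquad\text{for all }v\in H^1(A_0^\ast\sm\ov\Sg).
\]
I would prove it by repeating the argument of Theorem~\ref{thm:bnd} on the bounded domain $A_0^\ast$, whose flat interface $\G\cap A_0^\ast$ carries the trace and Besov-norm machinery of Section~\ref{sec:Sobolev}. First, the weighted trace bound of Lemma~\ref{ine}(ii): expressing the $H^{1/2}$-norm of $[v]_\G$ through the Besov integral~\eqref{eq:Besov} and using $[v]_{(\G\sm\ov\Sg)\cap A_0^\ast}=0$ (the analogue of Proposition~\ref{prop:Sobolev}) gives $\int_{\Sg\cap A_0}w_{\rm loc}\,|[v]_\Sg|^2\dd\s\le C\|v\|_{H^1(A_0^\ast\sm\ov\Sg)}^2$ with $w_{\rm loc}(x)=\int_{(\G\sm\Sg)\cap A_0^\ast}|x-y|^{-d}\dd\s(y)$. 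Here the hypothesis that $\hat\G\sm\hat\Sg$ is a $(d-2)$-set enters decisively: its cone $\G\sm\Sg$ is a $(d-1)$-set in $\G$, so the ball estimate in the proof of Lemma~\ref{ine}(ii) yields $w_{\rm loc}(x)\ge c\,\rho_\Sg(x)^{-1}$ on $\Sg\cap A_0$. Second, I replace $v$ by $v-\avg{v}$, the mean being taken over $A_0^\ast$; this changes neither $[v]_\Sg$ nor $\nb v$, and the Poincaré inequality $\|v-\avg{v}\|_{L^2(A_0^\ast)}^2\le\kp\,\|\nb v\|_{L^2(A_0^\ast)}^2$ holds because $A_0^\ast\sm\ov\Sg$ is bounded and connected, so the second Neumann eigenvalue $\lm_2^{\rm N}(A_0^\ast\sm\ov\Sg)$ is strictly positive; connectedness follows from $\hat\G\sm\hat\Sg\ne\emptyset$, which makes $\dR^d\sm\ov\Sg$ connected. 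Combining the two steps produces the reference inequality, with a right-hand side depending only on $\nb v$.

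It then remains to transport this to all scales and to sum. Applying the reference inequality to $v(x):=u(2^n x)$ and changing variables $x\mapsto 2^{-n}x$, every factor $2^{n(2-d)}$ produced on the left is matched on the right (this is precisely the dilation invariance noted above), so with the \emph{same} constant $c_0$ one gets
\[
\int_{\Sg\cap A_n}\rho_\Sg^{-1}\,|[u]_\Sg|^2\dd\s\ \le\ c_0\int_{A_n^\ast}|\nb u|^2\dd x,\qquad u\in H^1(\dR^d\sm\ov\Sg),\ n\in\mathbb{Z}.
\]
Summing over $n$, and using that the $A_n$ tile $\Sg$ while the $A_n^\ast$ have overlap at most three, I obtain
\[
\int_\Sg\rho_\Sg^{-1}|[u]_\Sg|^2\dd\s=\sum_n\int_{\Sg\cap A_n}\rho_\Sg^{-1}|[u]_\Sg|^2\dd\s\le c_0\sum_n\int_{A_n^\ast}|\nb u|^2\dd x\le 3c_0\int_{\dR^d}|\nb u|^2\dd x,
\]
which is the claim with $C^{-1}=3c_0$.

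The delicate point is not the summation but securing the reference inequality with a constant that is genuinely scale free. One ingredient is that the right-hand side of each shell estimate involves only $\nb u$ on $A_n^\ast$: the local averages $\avg{v}$, which differ from shell to shell, are eliminated by the Poincaré step before summation, so no telescoping or compatibility between shells is required. The other, which I expect to be the main obstacle, is the geometric bookkeeping near $\p\Sg$, namely verifying $w_{\rm loc}(x)\ge c\,\rho_\Sg(x)^{-1}$ on $\Sg\cap A_0$ with $w_{\rm loc}$ computed only over the \emph{truncated} boundary $(\G\sm\Sg)\cap A_0^\ast$. When $\rho_\Sg(x)$ is small the nearest point of $\G\sm\Sg$ lies inside $A_0^\ast$ and the $(d-1)$-set ball estimate of Lemma~\ref{ine}(ii) applies verbatim; when $\rho_\Sg(x)$ is bounded below (for points deep inside the cone the nearest boundary point may be the apex, which lies outside $A_0^\ast$) one instead combines a uniform positive lower bound on $w_{\rm loc}$ with $\rho_\Sg(x)\le|x|\le 2$. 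The self-similarity of the cone finally furnishes $(d-1)$-set constants for $\G\sm\Sg$ that are uniform in $n$, which is exactly what makes $c_0$ scale free; beyond this the analytic content is inherited entirely from the bounded case.
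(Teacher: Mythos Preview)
Your proof is correct and follows essentially the same strategy as the paper's: dyadic shell decomposition, reduction by scale invariance to a single reference shell, and on that shell the bounded-cut argument of Section~\ref{sec:bndcut}. The paper differs only in bookkeeping: it uses \emph{disjoint} open shells $\cA_j=\{2^j<|x|<2^{j+1}\}$ (hence no overlap factor~$3$) and, instead of your case split for $w_{\rm loc}\gtrsim\rho_\Sg^{-1}$, passes through the polar factorisation $\rho_\Sg(x')\simeq|x'|\,\wt\rho_{\hat\Sg}(\hat x')$ of~\eqref{eq:dist}, so that the reference inequality on $\cA_0$ is stated with the \emph{local} distance $\wt\rho_{\Sg_0}$, to which Lemma~\ref{ine}(ii) applies verbatim.
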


\begin{remark}
Denote by $\wt\rho_{\hat\Sigma}$ the (Euclidean) distance to $\partial\hat\Sigma$ and by $\hat x$ the ``angle'' $\frac{x}{|x|}$ for any $x\in\dR^d\setminus\{0\}$. There holds
\begin{equation}
\label{eq:dist}
   \tfrac{1}{2}\,|x'| \,\wt\rho_{\hat\Sigma}(\hat x') \le \rho_\Sigma(x') \le |x'| \,\wt\rho_{\hat\Sigma}(\hat x'),
   \quad x'\in\Sigma\setminus\{0\}.
\end{equation} 
\end{remark}


\begin{proof}
Consider the following ``partition'' of $\dR^d$ by dyadic spherical shells: For any integer $j\in\dZ$ let $\cA_j$ be the spherical shell
\[
   \cA_j = \cB_{2^{j+1}}(0) \setminus \ov{\cB_{2^j}(0)} = \big\{x\in\dR^d\colon |x|\in(2^j,2^{j+1})\big\}.
\] 

Then for any function $f\in L^2(\dR^d)$ there holds
\[
   \int_{\dR^d} |f(x)|^2\dd x = \sum_{j\in\dZ} \int_{\cA_j} |f(x)|^2\dd x\,.
\]
Set $\Sigma_j=\cA_j\cap\Sigma$. Then the theorem will be proved if we show that 
\begin{equation}
\label{eq:Aj}
	\int_{\cA_j}|\nb u(x)|^2\dd x 
	\ge 
	C\int_{\Sigma_j}
	\rho_\Sg(x')^{-1} |[u]_{\Sigma_j}(x')|^2\dd x',
	\quad 
	\forall\, u\in H^1(\cA_j\setminus\ov\Sigma_j),
\end{equation}
with a constant $C$ independent of $j$. Using the distance equivalence \eqref{eq:dist}, we see that \eqref{eq:Aj} follows from
\begin{equation}
\label{eq:Ajbis}
	\int_{\cA_j}|\nb u(x)|^2\dd x 
	\ge 
	2C\int_{\Sigma_j}
	|x'|^{-1}\wt\rho_{\hat\Sigma}(\hat x')^{-1} |[u]_{\Sigma_j}(x')|^2\dd x',
	\quad 
	\forall\, u\in H^1(\cA_j\setminus\ov\Sigma_j).
\end{equation}
Now, the norms in both members of \eqref{eq:Ajbis} are homogeneous by dilatation and have the same homogeneity. As a consequence if we prove \eqref{eq:Ajbis} for $j=0$ with a constant $C=C_0$, then we get \eqref{eq:Ajbis} for any $j\in\dZ$ with the same constant $C=C_0$ by virtue of the change of variables $x\mapsto 2^jx$ from $\cA_0$ to $\cA_j$. Using that $\wt\rho_{\Sg_0}(x') \le |x'|\wt\rho_{\hat\Sg}(\hat{x}')$, we see that the estimate \eqref{eq:Ajbis} for $j=0$ follows from
\begin{equation}
\label{eq:A0}
	\int_{\cA_0}|\nb u(x)|^2\dd x 
	\ge 
	2C_0\int_{\Sigma_0}
	\wt\rho_{\Sigma_0}(x')^{-1} |[u]_{\Sigma_0}(x')|^2\dd x',
	\quad 
	\forall\, u\in H^1(\cA_0\setminus\ov\Sigma_0)
\end{equation}
for some positive constant $C_0$. Now we are (almost) back to what we have proved in Section \ref{sec:bndcut}. Let $\Omega_\pm = \cA_0\cap\dR^d_\pm$. We are in a bounded configuration with the cut $\Sigma_0$ contained in the flat surface $\Gamma_0=\Gamma\cap\cA_0$. Since $\hat\Gamma\setminus\hat\Sigma$ is a $(d-2)$-set, we find that $\Gamma_0\setminus\Sigma_0$ is a $(d-1)$-set and that $\Omega:=\cA_0\setminus\ov{\Sigma_0}$ is a connected set. Both configurations (the present one and that in Section \ref{sec:bndcut}) differ by the fact that $\Gamma_0$ is not the full boundary of $\Omega_\pm$ any more. Nevertheless, we can check that all steps of the former proof are valid: characterization of traces from $H^1(\Omega_\pm)$ to $H^{1/2}(\Gamma_0)$ and positiveness of the second Neumann eigenvalue in $\Omega$. This yields the proof of \eqref{eq:A0}, and hence the proof of the theorem.
\end{proof}

\begin{remark}
Theorem \ref{thm:cone} can be generalized in the following way: Let $\hat\Gamma$ be a closed Lipschitz hypersurface in $\cS$, such that $\cS\setminus\hat\Gamma$ has two connected components $\hat\Omega_\pm$ that are Lipschitz subdomains of $\cS$. Set $\Gamma$ as the closed Lipschitz hypersurface of $\dR^d$ defined by
\[
   \Gamma = \Big\{x\in\dR^d\setminus\{0\},\quad \frac{x}{|x|} \in \hat\Gamma\Big\}.
\]
With these assumptions on $\Gamma$ replacing that $\Gamma$ is an hyperplane, Theorem \ref{thm:cone} still holds.
\end{remark}

The proof of Theorem \ref{thm:cone} derives from Theorem \ref{thm:bnd} via a localization process. The localized estimates are not a direct consequence of Theorem \ref{thm:bnd}, but are obtained by the same chain of arguments. The assumptions of Hypothesis \ref{hyp:1} are indeed sufficient, but not necessary. Let us give two examples.

\begin{example}[The penny shape hole]
Let $\Gamma$ be the hyperplane $\dR^{d-1}\times\{0\}$ and $\Sigma = \Gamma\setminus\ov{\cB_1(0)}$. This configuration is smooth, but satisfies neither the assumptions of Theorem \ref{thm:bnd} nor those of Theorem \ref{thm:cone}. But if we localize to $\Omega=\cB_2(0)$, we obtain, by the same process as before the local estimate (with $\Sigma_0=\Sigma\cap\Omega$, but $\rho_\Sigma$ still the distance to $\partial\Sigma$)
	\[
	\int_{\Omega}|\nb u(x)|^2\dd x 
	\ge 
	C\int_{\Sigma_0}
	\rho_{\Sigma}(x')^{-1} |[u]_{\Sigma_0}(x')|^2\dd x',
	\qquad 
	\forall\, u\in H^1(\Omega\setminus\ov{\Sigma_0}).
	\]
We note that outside $\Omega$, the function $x'\mapsto\rho_{\Sigma}(x')$ is equivalent to $|x'|$. Hence, combining the latter inequality with the estimate \eqref{eq:Rd-1}, we obtain
	\[
	\int_{\dR^d}|\nb u(x)|^2\dd x 
	\ge 
	C\int_{\Sigma}
	\rho_{\Sigma}(x')^{-1} |[u]_{\Sigma}(x')|^2\dd x',
	\qquad 
	\forall\, u\in H^1(\dR^d\setminus\ov{\Sigma}).
	\]
\end{example}

\begin{example}[The M\"obius strip]\label{ex:moebius}
Consider (in $\dR^3$) a smooth M\"obius strip $\Sigma$ surrounded by a torus $\Omega$. More specifically, suppose that $\Sigma$ is generated by smoothly twisting a segment of length $1$ along a circular curve $\gamma$ of radius $2$ (for any $x\in\gamma$, the intersection of $\Sigma$ with the plane $\Pi_x$ orthogonal to $\gamma$ at $x$ is a segment $I_x$ such that $x$ is the middle of $I_x$) and $\Omega$ is the torus of major radius $2$ and minor radius $1$. Choose as $\Gamma$ the larger M\"obius strip generated by the same curve $\gamma$ and intervals $J_x\supset I_x$ of length $2$ and center $x$. Signed jump of a function cannot be defined across such a $\Sigma$ but the jump norm can. The torus $\Omega$ can be cut into several slices so that the topology of each corresponding pieces of $\Sigma$ and $\Gamma$ is trivial. Adding the finitely many contributions one obtains
	\[
	\int_{\Omega}|\nb u(x)|^2\dd x 
	\ge 
	C\int_{\Sigma}
	\rho_{\Sigma}(x)^{-1} |[u]_{\Sigma}(x)|^2\dd\sigma(x) ,
	\qquad 
	\forall\, u\in H^1(\Omega\setminus\ov{\Sigma}).
	\]
\end{example}

\newcommand{\etalchar}[1]{$^{#1}$}


\end{document}